\definecolor{Chocolat}{rgb}{0.36, 0.2, 0.09}
\definecolor{BleuTresFonce}{rgb}{0.215, 0.215, 0.36}
\definecolor{EgyptianBlue}{rgb}{0.06, 0.2, 0.65}
\newtheorem{theorem}{Theorem}
\newtheorem{proposition}{Proposition}
\theoremstyle{definition}
\DeclareMathAlphabet{\pazocal}{OMS}{zplm}{m}{n}
\def\calP{\pazocal{P}}
\def\calQ{\pazocal{Q}}
\def\calX{\pazocal{X}}
\DeclareMathAlphabet{\mathbbold}{U}{bbold}{m}{n}
\def\kk{\mathbbold{k}}
\DeclareMathOperator{\gr}{gr}
\DeclareMathOperator{\RBLie}{\textsl{RBLie}_1}
\DeclareMathOperator{\RBAss}{\textsl{RBAss}_1}
\DeclareMathOperator{\PostLie}{\textsl{PostLie}}
\DeclareMathOperator{\ComTriAss}{\textsl{ComTriAss}}
\DeclareMathOperator{\TriDend}{\textsl{TriDend}}
\DeclareMathOperator{\PostPoisson}{\textsl{PostPoisson}}
\begin{document}

\title{Functorial PBW theorems for post-Lie algebras}

\author{Vladimir Dotsenko}
\address{School of Mathematics, Trinity College, Dublin 2, Ireland}
\email{vdots@maths.tcd.ie}

\subjclass[2010]{17B35 (Primary), 16B50, 18D50, 68Q42 (Secondary)}

\begin{abstract}
Using the categorical approach to Poincar\'e--Birkhoff--Witt type theorems from our previous work with Tamaroff, we prove three such theorems: for universal enveloping Rota--Baxter algebras of tridendriform algebras, for universal enveloping Rota--Baxter Lie algebras of post-Lie algebras, and for universal enveloping tridendriform algebras of post-Lie algebras. Similar results, though without functoriality of the PBW isomorphisms, were recently obtained by Gubarev. Our methods are completely different and mainly rely on methods of rewriting theory for shuffle operads.
\end{abstract}

\maketitle

\section{Introduction}

A vector space is said to have a structure of a (right) post-Lie algebra if it is equipped with an anti-commutative binary operation $a_1,a_2\mapsto [a_1,a_2]$ and a binary operation $a_1,a_2\mapsto a_1\triangleleft a_2$ without any symmetries that satisfy the identities
\begin{gather*}
[[a_1,a_2],a_3]-[[a_1,a_3],a_2]=[a_1,[a_2,a_3]],\\
[a_1,a_2]\triangleleft a_3=[a_1\triangleleft a_3,a_2]+[a_1,a_2\triangleleft a_3],\\
\left((a_1\triangleleft a_2)\triangleleft a_3-a_1\triangleleft(a_2\triangleleft a_3)\right)-\left((a_1\triangleleft a_3)\triangleleft a_2-a_1\triangleleft(a_3\triangleleft a_2)\right)=a_1\triangleleft[a_2,a_3].
\end{gather*}
This algebraic structure was introduced by Vallette~\cite{Val} as an interesting example illustrating some methods of operad theory, although it turned out to have had been known implicitly long before that. Perhaps the most natural geometric example of such structure arises on the tangent bundle of a manifold equipped with a flat connection of constant torsion; there are other exciting instances of post-Lie algebras in a plethora of research areas including Lie theory (when studying affine actions of Lie groups and generalised derivations of semisimple Lie algebras~\cite{BDV,BD}), numerical integration of differential equations (the Lie--Butcher formalism~\cite{LM}), operad theory (functorial splittings of algebraic operations via Manin products for operads~\cite{BBGN,VM}), and even in some aspects of the classical Yang--Baxter equation~\cite{BGN}.

There are two different ways to assign meaningful universal enveloping algebras to post-Lie algebras. In the context of Yang--Baxter equations, a natural universal enveloping algebra would have a structure of the so called Rota--Baxter Lie algebra of weight one, an algebraic notion closely related to the classical Yang--Baxter equation~\cite{BeDr,Sem}. Operad theory suggests a yet another good algebraic structure that would provide a meaningful definition of a universal enveloping algebra, tridendriform algebras of Loday and Ronco~\cite{LR}; relevance of those algebras are suggested by the recent application of tridendriform algebras to the Magnus expansion \cite{EFM}. In each of those situations, it is natural to ask whether there is a Poincar\'e--Birkhoff--Witt (PBW) type theorem describing the underlying vector space of the universal enveloping algebra
(putting this into the framework of PBW pairs of varieties of algebras~\cite{MiSh}). Recently, this question was studied by Gubarev who proved appropriate PBW type theorems in \cite{Gu1,Gu2}. 

The goal of this paper is to offer an alternative approach to Gubarev's theorems relying on the categorical framework for PBW theorems developed in a joint work with Tamaroff~\cite{DT}. Besides being an illustration of concrete methods of operad theory \cite{BrDo} at work and benefiting the reader who prefers the language of operad theory where possible, our PBW theorems are canonical, i.e. we prove that the PBW isomorphism can be chosen functorially with respect to algebra morphisms, which is never apparent if one chooses to use methods of \cite{Gu1,Gu2}.

It is perhaps worth mentioning that we do not consider in this paper another important kind of universal enveloping algebras of post-Lie algebras, the so called D-algebras prominent in the Lie--Butcher calculus~\cite{EFLM,LM}. Those algebras are obtained via an adjunction not arising from change of algebraic structure and, as a consequence, are a bit different; a PBW type theorem for them is true on the nose because of the Lie-theoretic nature of the definition. 

\subsection*{Acknowledgements. } I am grateful to Vsevolod Gubarev for a discussion of results of \cite{Gu2}, and to Murray Bremner for his comments on the paper \cite{DT}, and particularly for a query as to how results of that paper may be applied to post-Lie algebras.

\section{Recollections}

This is a short note, and we do not intend to overload it with excessive recollections. For relevant information on symmetric operads and Koszul duality, we refer the reader to the monograph \cite{LV}, and for information on shuffle operads, Gr\"obner bases and rewriting systems to the monograph \cite{BrDo}. We take the liberty to say, for a symmetric operad $\calP$, ``the shuffle operad $\calP$'' where one should really say ``the shuffle operad obtained from $\calP$ by applying the forgetful functor''. 

All operads in this paper are defined over a field $\kk$ of characteristic zero. We assume all operads reduced ($\mathcal{P}(0)=0$) and connected ($\mathcal{P}(1)=\kk$). When writing down elements of operads, we use arguments $a_1$, \ldots, $a_n$ as placeholders; any nontrivial signs (in case one wishes to, say, work with differential graded post-Lie algebras) would only arise from applying operations to arguments via the usual Koszul sign rule.

\subsection*{Tridendriform algebras}

In \cite{Val}, it is implicitly indicated that the operad $\PostLie$ of post-Lie algebras is related to operad $\TriDend$ of so called tridendriform algebras, or dendriform trialgebras studied by Loday and Ronco~\cite{LR}. A tridendriform algebra has three binary operations $a_1,a_2\mapsto a_1\prec a_2$, $a_1,a_2\mapsto a_1\succ a_2$, and $a_1,a_2\mapsto a_1\cdot a_2$ without any symmetries that satisfy the identities
\begin{gather*}
(a_1\prec a_2)\prec a_3=a_1\prec(a_2\prec a_3)+a_1\prec(a_2\succ a_3)+a_1\prec(a_2\cdot a_3),\\
(a_1\cdot a_2)\prec a_3=a_1\cdot(a_2\prec a_3),\qquad
(a_1\cdot a_2)\cdot a_3=a_1\cdot(a_2\cdot a_3),\\
(a_1\succ a_2)\prec a_3=a_1\succ(a_2\prec a_3),\\
(a_1\prec a_2)\cdot a_3=a_1\cdot(a_2\succ a_3),\qquad
(a_1\succ a_2)\cdot a_3=a_1\succ(a_2\cdot a_3),\\
(a_1\prec a_2)\succ a_3+(a_1\succ a_2)\succ a_3+(a_1\cdot a_2)\succ a_3=a_1\succ(a_2\succ a_3).
\end{gather*}
A precise relationship between tridendriform algebras and post-Lie algebras was described in \cite[Proposition 5.13]{BGN}: any tridendriform algebra can be made into a post-Lie algebra by considering the operations $[a_1,a_2]:=a_1\cdot a_2-a_2\cdot a_1$ and $a_1\triangleleft a_2:=a_1\prec a_2-a_2\succ a_1$. 

\subsection*{Rota--Baxter algebras}

From the general formalism of splitting of operations in operads, it is possible to relate post-Lie algebras and tridendriform algebras to Rota--Baxter algebras~\cite[Theorem 5.4]{BBGN} (for tridendriform algebras, this was observed earlier in~\cite[Section 4]{EF}). Let us recall the relevant definitions and statements. 

The operad $\RBLie$ of Rota--Baxter Lie algebras of weight one is generated by an anti-commutative binary operation $a_1,a_2\mapsto [a_1,a_2]$ and a unary operation $R$ that satisfy the identities
\begin{gather}
[[a_1,a_2],a_3]-[[a_1,a_3],a_2]=[a_1,[a_2,a_3]],\\
[R(a_1),R(a_2)]=R([R(a_1),a_2]+[a_1,R(a_2)]+[a_1,a_2]).\label{eq:RBL}
\end{gather}
Every Rota--Baxter Lie algebra of weight one can be made into a post-Lie algebra by considering the operation $a_1\triangleleft a_2:=[a_1,R(a_2)]$.

Let us remark that in the literature, one can instead find the definition of Rota-Baxter Lie algebras of weight $\lambda\ne 0$ where the second identity becomes $[R(a_1),R(a_2)]=R([R(a_1),a_2]+[a_1,R(a_2)]+\lambda[a_1,a_2])$. Note that replacing the operator $R$ in this latter identity by $-\lambda R$ implements an isomorphism between the corresponding operads for all possible nonzero values of $\lambda$. A particularly notable choice $\lambda=-1$ appears in the original definition of this algebraic structure, going back to the seminal article of Semenov-Tyan-Shanskii~\cite{Sem}.

The operad $\RBAss$ of Rota--Baxter associative algebras of weight one is generated by a binary operation $a_1,a_2\mapsto a_1\cdot a_2$ without any symmetries and a unary operation $R$ that satisfy the identities
\begin{gather}
(a_1\cdot a_2)\cdot a_3=a_1\cdot(a_2\cdot a_3),\\
R(a_1)\cdot R(a_2)=R(R(a_1)\cdot a_2+a_1\cdot R(a_2)+a_1\cdot a_2).\label{eq:RB}
\end{gather}
Every Rota--Baxter associative algebra of weight one can be made into a post-Lie algebra by considering the operations $a_1\prec a_2:=a_1\cdot R(a_2)$ and $a_1\succ a_2:=R(a_1)\cdot a_2$.

\subsection*{Functorial PBW theorems}

In joint work with Tamaroff \cite{DT}, we developed a categorical framework for PBW type theorems. Let us recall a slightly simplified version of the main result of that paper. Suppose that $\phi\colon\calP\rightarrow\calQ$ is a morphism of operads. It leads to a natural functor $\phi^*$ from the category of $\calQ$-algebras to the category of $\calP$-algebras (pullback of the structure). This functor admits a left adjoint $\phi_!$ computed via the relative composite product formula $\phi_!(A)=\calQ\circ_\calP A$, where $A$ in the latter formula is regarded as a ``constant analytic endofunctor'' (a symmetric sequence supported at arity zero). 
We say that the datum $(\calP,\calQ,\phi)$ \emph{has the PBW property} if there exists an endofunctor~$\calX$ such that the underlying object of the universal enveloping $\calQ$-algebra $\phi_!(A)$ of any $\calP$-algebra~$A$ is isomorphic to~$\calX(A)$ naturally with respect to $\calP$-algebra morphisms. 

\begin{proposition}[{\cite[Theorem 1]{DT}}]\label{th:PBWNat}
Let  $\phi\colon\calP\rightarrow\calQ$ be a morphism of operads. The datum $(\calP,\calQ,\phi)$ has the PBW property if and only if the right $\calP$-module action on $\calQ$ via $\phi$ is free.
\end{proposition}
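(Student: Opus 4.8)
The plan is to prove both directions of the equivalence by unwinding the relative composite product formula $\phi_!(A)=\calQ\circ_\calP A$.

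First I would treat the ``if'' direction. Suppose the right $\calP$-module action on $\calQ$ via $\phi$ is free, say $\calQ\cong\calX\circ\calP$ as right $\calP$-modules for some symmetric sequence $\calX$ (where the right $\calP$-action on the right-hand side is the obvious one on the second factor). Then for any $\calP$-algebra $A$, regarded as a constant analytic endofunctor supported in arity zero, the relative composite product gives
\[
\phi_!(A)=\calQ\circ_\calP A\cong(\calX\circ\calP)\circ_\calP A\cong\calX\circ(\calP\circ_\calP A)\cong\calX\circ A=\calX(A),
\]
using associativity of the (relative) composite product and the fact that $\calP\circ_\calP A$ is just $A$ with its given $\calP$-algebra structure (the relative composite product over $\calP$ absorbs the free action). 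Each isomorphism here is natural in $A$ with respect to $\calP$-algebra morphisms, because the defining isomorphism $\calQ\cong\calX\circ\calP$ is a fixed isomorphism of right $\calP$-modules and all the remaining identifications are canonical. Hence $(\calP,\calQ,\phi)$ has the PBW property with witnessing endofunctor $\calX$ (viewed as the analytic endofunctor associated to the symmetric sequence $\calX$).

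For the ``only if'' direction, suppose $(\calP,\calQ,\phi)$ has the PBW property, witnessed by an endofunctor $\calX$, so that $\calQ\circ_\calP A\cong\calX(A)$ naturally in the $\calP$-algebra $A$. The idea is to evaluate this natural isomorphism on free $\calP$-algebras. For a vector space $V$, the free $\calP$-algebra is $\calP(V):=\calP\circ V$, and one computes $\phi_!(\calP(V))=\calQ\circ_\calP(\calP\circ V)\cong\calQ\circ V$, since applying $\phi_!$ to a free $\calP$-algebra yields the free $\calQ$-algebra on the same generators (adjunctions compose). Thus $\calQ\circ V\cong\calX(\calP\circ V)$ naturally in $V$; identifying coefficients of the analytic functors appearing on both sides (working arity by arity, using that we are over a field of characteristic zero so that polynomial functors are determined by their coefficients) yields an isomorphism of symmetric sequences $\calQ\cong\calX\circ\calP$. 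The remaining point is to check that this isomorphism is compatible with the right $\calP$-module structures: the right $\calP$-action on $\calQ$ corresponds, under $\phi_!$ applied to free algebras, to the action of $\calP$ on generators, i.e.\ to precomposition by elements of $\calP$, and naturality of the PBW isomorphism with respect to the $\calP$-algebra maps $\calP(V)\to\calP(W)$ induced by maps $V\to\calP(W)$ forces the identification to intertwine the free right action on $\calX\circ\calP$ with the action on $\calQ$. Therefore the right $\calP$-module $\calQ$ is free.

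The main obstacle I anticipate is the bookkeeping in the ``only if'' direction: one must argue carefully that the naturality hypothesis, stated only for morphisms of $\calP$-algebras, is strong enough to pin down the module structure and not merely the underlying symmetric sequence. The trick is that morphisms between \emph{free} $\calP$-algebras already encode the full right $\calP$-action on $\calQ$ (a free $\calP$-algebra map $\calP\circ V\to\calP\circ W$ is the same as a map $V\to\calP\circ W$ of vector spaces, and these generate all the structure maps one needs), so restricting the natural isomorphism to the full subcategory of free $\calP$-algebras loses no information about freeness. Once this reduction to free algebras is in place, the rest is the formal manipulation of composite products sketched above, together with the standard fact that over a field of characteristic zero an isomorphism of analytic endofunctors is the same as an isomorphism of the underlying symmetric sequences.
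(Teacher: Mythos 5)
The paper does not actually prove this proposition --- it is quoted verbatim from \cite[Theorem 1]{DT} as a recollection --- so I am measuring your argument against the proof in that reference. Your ``if'' direction is correct and is the standard argument: since the augmented simplicial object $\calP\circ\calP\circ A\rightrightarrows\calP\circ A\to A$ is a \emph{split} coequalizer, it is preserved by any functor, in particular by $\calX\circ(-)$, which gives $(\calX\circ\calP)\circ_\calP A\cong\calX\circ A$ naturally in $A$; this is exactly how the easy implication goes.

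The ``only if'' direction contains a genuine gap. The PBW property, as defined here and in \cite{DT}, supplies only an \emph{arbitrary} endofunctor $\calX$ of vector spaces; nothing guarantees that $\calX$ is analytic, i.e.\ the Schur functor of a symmetric sequence. Consequently your pivotal step --- ``identifying coefficients \ldots yields an isomorphism of symmetric sequences $\calQ\cong\calX\circ\calP$'' --- does not parse: the composite product $\calX\circ\calP$ is undefined for a non-analytic $\calX$, and the fact that the composite functor $V\mapsto\calX(\calP(V))$ happens to be isomorphic to the analytic functor $V\mapsto\calQ(V)$ does not let you ``divide by $\calP$'' to extract an analytic $\calX$ and a generating symmetric sequence. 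This is not bookkeeping: producing a symmetric sequence $\calY$ together with a right-module isomorphism $\calY\circ\calP\cong\calQ$ is precisely the content of the freeness assertion, and it is the part of \cite[Theorem 1]{DT} that requires real work. One way to close the gap is to take as candidate generators the indecomposables of $\calQ$ as a right $\calP$-module (the quotient of $\calQ$ by the image of all compositions with operations of $\calP$ of arity at least two), and then use the natural isomorphism on free algebras, the weight grading of $\calP(V)$, and naturality with respect to the rescaling Kleisli morphisms $v\mapsto\lambda v$ to produce a canonical splitting and to prove by induction on weight that the resulting map from the free module on these generators to $\calQ$ is an isomorphism. Your observation that naturality with respect to all morphisms between free $\calP$-algebras (equivalently, Kleisli morphisms $V\to\calP(W)$) already encodes the full right $\calP$-module structure of $\calQ$ is correct and is indeed the other essential ingredient; but by itself it establishes only that the isomorphism \emph{would} be one of right modules if the factorization $\calQ\cong\calY\circ\calP$ were already in hand, which is the very point at issue.
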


The two constructions of post-Lie algebras mentioned above actually fit into a commutative diagram of operads
 \[
 \xymatrix{
\PostLie\ar@{->}^{\phi}[rr] \ar@{->}_{\alpha}[d] & & \ \TriDend \ar@{->}^{\beta}[d] \\ 
 \RBLie\ar@{->}^{\psi}[rr]  & &\RBAss      
 }
 \]
where the morphisms are defined by 
\begin{gather*}
\alpha([a_1,a_2])=[a_1,a_2],\qquad \alpha(a_1\triangleleft a_2)=[a_1,R(a_2)],\\
\beta(a_1\cdot a_2)=a_1\cdot a_2,\qquad \beta(a_1\prec a_2)=a_1\cdot R(a_2), \qquad \beta(a_1\succ a_2)=R(a_1)\cdot a_2,\\
\phi([a_1,a_2])=a_1\cdot a_2-a_2\cdot a_1,\qquad \phi(a_1\triangleleft a_2)=a_1\prec a_2-a_2\succ a_1,\\
\psi([a_1,a_2])=a_1\cdot a_2-a_2\cdot a_1,\qquad \psi(R(a_1))=R(a_1).
\end{gather*}

Some Poincar\'e--Birkhoff-Witt type results for algebras over operads involved in this commutative diagram were established by Gubarev who proved, using Gr\"obner--Shirshov bases in Rota-Baxter algebras, that a PBW type theorem holds for universal enveloping $\RBAss$-algebras of $\TriDend$-algebras \cite{Gu3}, for universal enveloping $\RBLie$-algebras of $\PostLie$-algebras \cite{Gu1}, and, more recently, for universal enveloping $\TriDend$-algebras of $\PostLie$-algebras \cite{Gu2}. He also established (private communication) that a PBW-type theorem does not hold for universal enveloping $\RBAss$-algebras of $\RBLie$-algebras. 

While Gr\"obner--Shirshov methods instantly lead to normal forms in universal enveloping algebras and as such are of course useful for applications, it would be desirable to have, for results about objects defined categorically via a universal property, canonical proofs that do not rely on arbitrary choices (such as a choices of ordered bases in algebras). It turns out that our categorical approach to the PBW property is applicable in all those cases. Below, we prove functorial versions of Gubarev's results using concrete algorithmic methods of operad theory. 

\section{PBW theorem for universal enveloping Rota-Baxter algebras of tridendriform algebras}

In this section, we prove a ``toy example'', the PBW theorem for universal enveloping $\RBAss$-algebras of $\TriDend$-algebras. This case is substantially simpler since all operads involved are obtained by symmetrization of nonsymmetric operads, and so one can perform all the operadic computation in the nonsymmetric universe where computational complexity is usually much lower. 

\begin{theorem}\label{th:TDRB}
The datum $(\TriDend,\RBAss,\beta)$ has the PBW property. 
\end{theorem}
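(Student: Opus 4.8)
The plan is to invoke Proposition~\ref{th:PBWNat}: it suffices to prove that $\RBAss$, viewed as a right module over $\TriDend$ via $\beta$, is free. Since all three operads ($\TriDend$, $\RBAss$, and the map $\beta$) are obtained by symmetrization from nonsymmetric operads, and freeness of a module can be checked after applying the forgetful functor from symmetric to shuffle operads, I would carry out the entire computation in the nonsymmetric world, where the combinatorics is controlled by planar trees rather than by trees with symmetric-group decorations. Concretely, the first step is to fix a convenient monomial order on the nonsymmetric operad $\RBAss$ and compute a Gröbner basis for the ideal of relations~\eqref{eq:RB} together with associativity; the normal monomials are (up to the usual identification) words in the letters ``$\cdot$'' with the unary operator $R$ applied in a restricted way — one expects the leading term of the Rota–Baxter relation to be $R(a_1)\cdot R(a_2)$, so normal forms avoid a product of two $R$-headed factors, giving a basis of $\RBAss$ indexed by planar binary trees whose internal vertices carry at most a bounded pattern of $R$'s.

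The second step is the heart of the argument: I would identify, inside the space of normal monomials of $\RBAss$, an explicit set $B$ of ``module generators'' and show that every normal monomial of $\RBAss$ is uniquely obtained by grafting a normal monomial of $\TriDend$ (in its Loday–Ronco Gröbner basis, whose normal forms are also indexed by planar trees) onto the leaves of an element of $B$, where the grafting uses the three operations $\prec,\succ,\cdot$ translated via $\beta$ into $a_1\cdot R(a_2)$, $R(a_1)\cdot a_2$, $a_1\cdot a_2$. The right choice of $B$ should be: those normal $\RBAss$-monomials whose ``outermost layer'' cannot be written in the image of any of the three $\beta$-operations — intuitively, monomials that end (in the planar word reading) with a stray $R$ not absorbable into a $\prec$ or $\succ$ pattern. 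Checking that this decomposition is well-defined and unique is exactly the statement that the right action of $\TriDend$ on $\RBAss$ is free with basis $B$; algorithmically this is the assertion that the rewriting system ``rewrite any occurrence of the three grafting patterns downward'' is confluent and terminating on $\RBAss$-normal monomials, which one verifies by checking that the critical pairs coming from overlaps of $\beta(\prec),\beta(\succ),\beta(\cdot)$-patterns with each other and with the Rota–Baxter relation all resolve.

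The main obstacle I anticipate is this confluence check: the Rota–Baxter relation~\eqref{eq:RB} converts $R(a_1)\cdot R(a_2)$ into a sum of three terms, two of which ($R(R(a_1)\cdot a_2)$ and $R(a_1\cdot R(a_2))$) re-introduce nested $R$'s in a way that interacts nontrivially with the $\succ$- and $\prec$-grafting patterns, so one must make sure that the proposed generating set $B$ is stable under the reductions these terms trigger. I expect that choosing the monomial order carefully — grading first by the number of applications of $R$ and breaking ties so that the Rota–Baxter relation and the Loday–Ronco relations together form a quadratic-type Gröbner basis with no ``runaway'' overlaps — makes every relevant critical pair resolve, and that the resulting normal-form count matches the freeness prediction $\RBAss \cong \TriDend \circ B$ as right $\TriDend$-modules (equivalently, a Poincaré-series identity $\mathrm{RBAss}(x) = \mathrm{TriDend}(B(x))$ that can be used as a sanity check). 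Once freeness of the module is established, Proposition~\ref{th:PBWNat} immediately yields the PBW property for $(\TriDend,\RBAss,\beta)$, with the functorial enveloping-algebra endofunctor $\calX$ given by $A\mapsto \mathscr{F}_B(A)$, the free ``$B$-structure'' on the underlying space of $A$.
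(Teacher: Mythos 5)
Your overall framing is correct and matches the paper's: by Proposition~\ref{th:PBWNat} it suffices to show that $\RBAss$ is free as a right $\TriDend$-module via $\beta$, and since every operad in sight is the symmetrisation of a nonsymmetric operad, this can be checked in the nonsymmetric universe. The divergence, and the gap, is in how you propose to establish freeness.

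The route you choose --- normal forms for $\RBAss$ in the generators $\cdot,R$ with leading term $R(a_1)\cdot R(a_2)$, followed by an explicit unique factorisation of normal monomials through the $\beta$-images of $\prec,\succ,\cdot$ --- is harder than you acknowledge, and its central verification is left entirely as an expectation. The trouble is that composites of $\beta$-images of normal $\TriDend$-monomials are generally not normal in your sense: for instance $a_1\succ(a_2\succ a_3)$ maps to $R(a_1)\cdot(R(a_2)\cdot a_3)$, which after left-combing becomes $(R(a_1)\cdot R(a_2))\cdot a_3$ and then explodes via the Rota--Baxter relation into a sum of three terms. So the map from $B\circ\TriDend$ to $\RBAss$ is not monomial-to-monomial, the ``unique grafting decomposition'' cannot simply be read off from normal forms, and your description of $B$ (``monomials ending with a stray $R$'') is not pinned down precisely enough to even begin the critical-pair analysis you correctly identify as the crux. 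You also have the composite the wrong way around in your sanity check: freeness as a \emph{right} module means $\RBAss\cong B\circ\TriDend$, so the Poincar\'e-series identity should read $\mathrm{RBAss}(x)=B(\mathrm{TriDend}(x))$, not $\mathrm{TriDend}(B(x))$; correspondingly the endofunctor $\calX$ is the Schur functor of the collection $B$, not a ``free $B$-structure''.

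The paper sidesteps exactly this difficulty by a change of generators: it presents $\RBAss$ on the generators $\cdot,\prec,\succ,R$ (with $\prec,\succ$ the $\beta$-images adjoined as genuine generators) and writes an explicit eleven-rule rewriting system whose rules either eliminate $R$ adjacent to $\cdot$, push $R$ towards the root (these encode the Rota--Baxter identity), or are the tridendriform relations oriented by an admissible order. Termination and confluence are then checked, with the tridendriform part certified to be a Gr\"obner basis by the dimension count $2^n-1$ against $\dim\TriDend^!(n)$, and --- crucially --- the left-hand sides of the rules contain no monomial produced by the right $\TriDend$-action, so the freeness criterion of \cite[Th.~4(2)]{Dot09} applies without ever having to name the module basis $B$. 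To complete your version you would either have to carry out the full critical-pair resolution in the $\{\cdot,R\}$ coordinates (essentially redoing Gubarev's Gr\"obner--Shirshov computation, which moreover does not by itself yield functoriality), or adopt the change of generators, which is what makes the argument short.
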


\begin{proof}
It is sufficient to prove the free right module property in the universe of nonsymmetric operads, since symmetrisation takes nonsymmetric compositions to symmetric compositions. To not complicate notation, we identify operations with their images under $\beta$, so we use the name $a_1\prec a_2$ for the operation $a_1\cdot R(a_2)\in \RBAss$, and the name $a_1\succ a_2$ for the operation  $R(a_1)\cdot a_2\in \RBAss$. 

In this case, it is beneficial to consider rewriting systems rather than Gr\"obner bases. Let us show that the operad $\RBAss$ can be presented by the following rewriting system:
\begin{gather}
R(a_1)\cdot a_2\to a_1\succ a_2,\label{eq:TDelimR}\\
a_1\cdot R(a_2)\to a_1\prec a_2,\label{eq:TDelimR'}\\
a_1\succ R(a_2)\to R(a_1\succ a_2+a_1\prec a_2+a_1\cdot a_2),\label{eq:TD-a-R}\\
R(a_1)\prec a_2\to R(a_1\succ a_2+a_1\prec a_2+a_1\cdot a_2),\label{eq:TD-R-b}\\
a_1\prec (a_2\prec a_3)\to -a_1\prec(a_2\succ a_3)-a_1\prec(a_2\cdot a_3)+(a_1\prec a_2)\prec a_3,\\
a_1\cdot (a_2\prec a_3)\to (a_1\cdot a_2)\prec a_3,\\
a_1\cdot(a_2\cdot a_3)\to (a_1\cdot a_2)\cdot a_3,\\
a_1\succ (a_2\prec a_3)\to (a_1\succ a_2)\prec a_3,\\
a_1\cdot (a_2\succ a_3)\to (a_1\prec a_2)\cdot a_3,\label{eq:rmod}\\
a_1\succ (a_2\cdot a_3)\to (a_1\succ a_2)\cdot a_3,\\
a_1\succ (a_2\succ a_3)\to (a_1\succ a_2)\succ a_3+(a_1\prec a_2)\succ a_3+(a_1\cdot a_2)\succ a_3.
\end{gather}
We start with a remark that will be useful in the proof. Let us first focus on the rewriting rules that do not involve $R$ directly. Those are defining relations of the tridendriform operad which are known to hold in $\RBAss$. It turns out that if we impose the reverse length path-lexicographic order with $(a_1\cdot a_2)<(a_1\succ a_2)<(a_1\prec a_2)$, these rewriting rules single out the leading terms for this admissible ordering. This means that we have an upper bound on dimensions of the Koszul dual operad $\TriDend^!$: such an upper bound is given by the number of tree monomials whose divisors are the leading terms of the relations. By an easy computation, the upper bound in arity $n$ thus obtained is $2^n-1$ which coincides with the known dimension formula for $\dim\TriDend^!(n)$ \cite{LR}. Since the bound that we obtained is sharp, this means that our relations form a Gr\"obner basis of $\TriDend$.

We proceed with exploring the whole rewriting system we presented. First of all, we note that all the rewriting rules are consistent with the defining relations of $\RBAss$. Indeed, the rewriting rules \eqref{eq:TDelimR} and \eqref{eq:TDelimR'} are simply the definitions of the two tridendriform operations, each of the rewriting rules \eqref{eq:TD-a-R} and \eqref{eq:TD-R-b} is in fact equivalent to the Rota-Baxter relation \eqref{eq:RB}, and the remaining relations, as we already mentioned, are defining relations of the tridendriform operad which are known to hold in $\RBAss$. Second, this rewriting system is terminating. For this, we note that the rules \eqref{eq:TDelimR}--\eqref{eq:TD-R-b} move the operation $R$ further from the leaves of a tree monomial, so they can only be applied finitely many times. After that, we are left with the rewriting rules that do not involve $R$, and we demonstrated them to come from an admissible ordering of monomials, so termination is automatic. Finally, let us show that this rewriting system is confluent. Confluence of rewriting rules not involving $R$ follows from the fact that they form a Gr\"obner basis of $\TriDend$. Confluence of rewriting rules \eqref{eq:TDelimR}--\eqref{eq:TDelimR'} is immediate using the rules \eqref{eq:TD-a-R} and \eqref{eq:TD-R-b}. Finally, confluence between the rules invoving $R$ and the rules not involving $R$ is a simple calculation that we omit here. 

We observe that the left hand sides of our rewriting rules do not involve tree monomials obtained by the right module action of $\TriDend$, so \cite[Th.~4(2)]{Dot09} applies, proving freeness as a right module.  
\end{proof}

To conclude this section, let us make a remark on non-existence of PBW type results in one similar case. In \cite{Gu3}, it is established that a PBW type theorem does not hold for universal enveloping Rota--Baxter associative algebras (of weight zero) of dendriform algebras. From the categorical viewpoint, this is very easy to see. Indeed, Formula \eqref{eq:rmod} also holds in this case; both the left hand side and the right hand side are equal to $a_1\cdot R(a_2)\cdot a_3$. However, in the dendriform case, the operation $a_1\cdot a_2$ is not a part of the structure, so the coset of this operation represents a nontrivial right module generator, so Formula \eqref{eq:rmod} represents a nontrivial relation in the corresponding module. That said, for non-existence of PBW theorems, the result of \cite{Gu3} is stronger: it shows that even a non-canonical PBW type result (where isomorphisms are not fully functorial with respect to algebra morphisms) cannot hold.

\section{PBW theorem for universal enveloping Rota-Baxter Lie algebras of post-Lie algebras}

In this section, we prove the PBW theorem for universal enveloping $\RBLie$-algebras of $\PostLie$-algebras. The argument is similar to that of Theorem \ref{th:TDRB}, but uses shuffle operads. 

\begin{theorem}\label{th:PLRB}
The datum $(\PostLie,\RBLie,\alpha)$ has the PBW property. 
\end{theorem}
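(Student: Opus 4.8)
The plan is to invoke Proposition~\ref{th:PBWNat}: it is enough to show that the right $\PostLie$-module structure on $\RBLie$ induced by $\alpha$ is free. As in the proof of Theorem~\ref{th:TDRB}, I would deduce this from a convergent rewriting system for the operad $\RBLie$ none of whose rules has a left-hand side of the shape produced by grafting $\PostLie$-operations below a generator, so that \cite[Th.~4(2)]{Dot09} applies. The difference with the previous section is that the Lie bracket is anti-symmetric, so the computation must be carried out in the universe of shuffle operads --- symmetrisation carries shuffle compositions to symmetric compositions, so freeness in the shuffle universe suffices --- with the standard conventions encoding anti-commutativity of $[\,,\,]$ and absence of symmetry of $\triangleleft$ on the level of shuffle tree monomials. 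Throughout I identify the operation $a_1\triangleleft a_2$ with its image $[a_1,R(a_2)]$ under $\alpha$.

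The rules I would use split into three families. The first eliminates an occurrence of $R$ directly below a bracket, rewriting the shuffle monomials $[a_1,R(a_2)]$ and $[R(a_1),a_2]$ in terms of $\triangleleft$. The second is extracted from the Rota--Baxter identity~\eqref{eq:RBL}: it rewrites $[R(a_1),R(a_2)]$, equivalently $R(a_1)\triangleleft a_2$, pushing the surviving copy of $R$ towards the root, with right-hand side an application of $R$ to a combination of shorter monomials in $[\,,\,]$ and $\triangleleft$. The third family involves neither $R$ nor $\triangleleft$ directly: choosing a suitable admissible order on shuffle tree monomials, I would arrange that these rules are exactly the defining identities of $\PostLie$ with their leading terms singled out, and then check --- as in the tridendriform case --- that the resulting upper bound on the dimension of $\PostLie^!$ in each arity matches the known value \cite{Val}, so that the third family is a Gr\"obner basis of the shuffle operad $\PostLie$.

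It then remains to verify the three properties of a convergent rewriting system. Consistency is routine, since every rule follows from the Jacobi identity, from~\eqref{eq:RBL}, or from the definition of $\triangleleft$. For termination I would use a weight recording the positions of the $R$-labelled vertices relative to the root (such as the total number of non-$R$ internal vertices lying above an $R$-vertex), combined lexicographically with the admissible order of the third family; this requires care, because the Rota--Baxter rule both displaces and multiplies occurrences of $R$, so the weight must be designed to decrease strictly on every monomial of every right-hand side. Confluence of the third family is the Gr\"obner property above; confluence within and between the first two families and of all their overlaps with the third family is a finite diagram chase, which I would spell out for the essential overlaps and omit for the routine ones. Since the left-hand sides of all rules avoid tree monomials obtained by the right $\PostLie$-action, \cite[Th.~4(2)]{Dot09} then gives freeness of $\RBLie$ as a right $\PostLie$-module, and Proposition~\ref{th:PBWNat} concludes. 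I expect the real work to sit in three places: the shuffle-operadic bookkeeping forced by the anti-symmetric bracket, the verification that the chosen order turns the post-Lie relations into a Gr\"obner basis via the dimension count for $\PostLie^!$, and the engineering of a termination order that withstands the interaction of the $R$-moving rules with the $R$-free ones.
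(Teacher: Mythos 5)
Your proposal matches the paper's argument essentially step for step: the same passage to shuffle operads, the same three families of rewriting rules (elimination of $R$ under a bracket, the Rota--Baxter rule pushing $R$ towards the root, and the post-Lie relations verified to be a Gr\"obner basis via the dimension count $2^n-1$ for $\PostLie^!$), the same termination and confluence analysis, and the same appeal to \cite[Th.~4(2)]{Dot09}. The only cosmetic difference is that the paper spells out that the Rota--Baxter family consists of two shuffle rules (for $R(a_1)\triangleleft a_2$ and for $a_1\overline{\triangleleft}R(a_2)$) and uses the simpler observation that these rules move $R$ strictly away from the leaves for termination.
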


\begin{proof}
It is enough to prove that the shuffle operad $\RBLie$ is free as a right $\PostLie$-module. As above, to not complicate the notation, we identify operations with their images under $\alpha$, so we use the name $a_1\triangleleft a_2$ for the operation $[a_1, R(a_2)]\in \RBLie$. In line with the usual shuffle operad approach, we denote the opposite operation $a_2\triangleleft a_1$ by $a_1\overline{\triangleleft} a_2$.

Let us show that the shuffle operad $\RBLie$ can be presented by the following rewriting system:
\begin{gather}
[a_1, R(a_2)]\to a_1\triangleleft a_2,\label{eq:PLelimR}\\
[R(a_1),a_2]\to -a_1\overline{\triangleleft} a_2,\label{eq:PLelimR'}\\
R(a_1)\triangleleft a_2\to R(a_1\triangleleft a_2-a_1\overline{\triangleleft} a_2+[a_1,a_2]),\label{eq:PL-R-a}\\
a_1\overline{\triangleleft} R(a_2)\to R(-a_1\triangleleft a_2+a_1\overline{\triangleleft} a_2-[a_1,a_2]),\label{eq:PL-a-R}\\
a_1\overline{\triangleleft}[a_2,a_3]\to  - [a_1\overline{\triangleleft} a_3 ,a_2]  +  [a_1\overline{\triangleleft} a_2,a_3],\\
[a_1,a_2\overline{\triangleleft}a_3] \to  [a_1,a_3]\triangleleft a_2  -  [a_1 \triangleleft a_2,a_3],\\
[a_1,a_2\triangleleft a_3]\to [a_1,a_2]\triangleleft a_3 - [a_1\triangleleft a_3,a_2],\\
[a_1,[a_2,a_3]] \to -[[a_1,a_3],a_2] + [[a_1,a_2],a_3],\\
a_1\overline{\triangleleft} (a_2\triangleleft a_3)\to (a_1\overline{\triangleleft} a_2)\triangleleft a_3-(a_1\triangleleft a_3)\overline{\triangleleft} a_2 + (a_1\overline{\triangleleft} a_3)\overline{\triangleleft} a_2 - [a_1,a_3]\overline{\triangleleft} a_2,\\
a_1\overline{\triangleleft}(a_2\overline{\triangleleft}a_3)\to (a_1\overline{\triangleleft}a_3)\triangleleft a_2 - (a_1\triangleleft a_2)\overline{\triangleleft}a_3 + (a_1\overline{\triangleleft}a_2)\overline{\triangleleft}a_3 - [a_1,a_2]\overline{\triangleleft}a_3,\\
a_1\triangleleft (a_2\triangleleft a_3)\to a_1\triangleleft(a_2\overline{\triangleleft}a_3)-a_1\triangleleft[a_2,a_3] - (a_1\triangleleft a_3)\triangleleft a_2 + (a_1\triangleleft a_2)\triangleleft a_3.
\end{gather}
We first focus on the rewriting rules that do not involve $R$ directly. Those are defining relations of the post-Lie operad which are known to hold in $\RBLie$. It turns out that if we impose the reverse length path-lexicographic order with $[a_1,a_2]<(a_1\overline{\triangleleft} a_2)<(a_1\triangleleft a_2)$, these rewriting rules single out the leading terms for this admissible ordering. This means that we have an upper bound on dimensions of the Koszul dual operad $\PostLie^!$: such an upper bound is given by the number of tree monomials whose divisors are the leading terms of the relations. By an easy computation, the upper bound in arity $n$ thus obtained is $2^n-1$; this coincides with the dimension of $\PostLie^!(n)$ by inspection of free $\PostLie^!$-algebras~\cite[Theorem 3.8]{Val}. Since the bound that we obtained is sharp, this means that our relations form a Gr\"obner basis of $\PostLie$.

We proceed with exploring the whole rewriting system we presented. First of all, we note that all the rewriting rules are consistent with the defining relations of $\RBLie$. Indeed, the rewriting rules \eqref{eq:PLelimR} and \eqref{eq:PLelimR'} are simply the definitions of the two shuffle post-Lie operations, each of the rewriting rules \eqref{eq:PL-R-a} and \eqref{eq:PL-a-R} is in fact equivalent to the Rota-Baxter relation \eqref{eq:RBL}, and the remaining relations, as we already mentioned, are defining relations of the post-Lie operad which are known to hold in $\RBLie$. Second, this rewriting system is terminating. For this, we note that the rules \eqref{eq:PLelimR}--\eqref{eq:PL-a-R} move the operation $R$ further from the leaves of a tree monomial, so they can only be applied finitely many times. After that, we are left with the rewriting rules that do not involve $R$, and we demonstrated them to come from an admissible ordering of monomials, so termination is automatic. Finally, let us show that this rewriting system is confluent. Confluence of rewriting rules not involving $R$ follows from the fact that they form a Gr\"obner basis of $\PostLie$. Confluence of rewriting rules \eqref{eq:PLelimR}--\eqref{eq:PLelimR'} is immediate using the rules \eqref{eq:PL-R-a} and \eqref{eq:PL-a-R}. Finally, confluence between the rules invoving $R$ and the rules not involving $R$ is a simple calculation that we omit here. 

We observe that the left hand sides of our rewriting rules do not involve tree monomials obtained by the right module action of $\PostLie$, so \cite[Th.~4(2)]{Dot09} applies, proving freeness as a right module.  
\end{proof}

A similar argument can be used to prove a PBW type theorem for universal enveloping Rota--Baxter Lie algebras (of weight zero) of pre-Lie algebras. We leave it to the reader to modify our proof for that purpose. 

\section{PBW theorem for universal enveloping tridendriform algebras of post-Lie algebras}

In this section, we prove the PBW theorem for universal enveloping $\TriDend$-algebras of $\PostLie$-algebras. The argument is slightly more intricate, blending methods available for symmetric operads with some shuffle operad computations, making it similar to our previous work \cite{DotF19}, as well as to the pre-Lie/dendriform PBW theorem \cite[Theorem 3]{DT}. 

\begin{theorem}
The datum $(\PostLie,\TriDend,\phi)$ has the PBW property. 
\end{theorem}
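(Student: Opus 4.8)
The plan is to apply Proposition~\ref{th:PBWNat}: it suffices to show that the shuffle operad $\TriDend$ is free as a right module over $\PostLie$, the module structure being induced by~$\phi$. As in the proofs of Theorems~\ref{th:TDRB} and~\ref{th:PLRB}, we shall do this by exhibiting a convergent (terminating and confluent) rewriting system for the shuffle operad $\TriDend$ all of whose left-hand sides avoid the tree monomials produced by the right $\PostLie$-action, so that \cite[Th.~4(2)]{Dot09} applies. The new feature, and the reason this case is more delicate than the previous two, is that $\TriDend$ is the symmetrisation of a nonsymmetric operad whereas $\PostLie$ is not; following the strategy of~\cite{DotF19} and of the pre-Lie/dendriform theorem~\cite[Theorem~3]{DT}, we use the former to keep the Gr\"obner basis bookkeeping for the tridendriform relations as light as possible (most of it is the nonsymmetric computation already carried out in Theorem~\ref{th:TDRB}), while the right module structure genuinely lives in the shuffle universe.

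The first step is a change of generators for the shuffle operad $\TriDend$. As a shuffle operad it is generated by the six binary operations $a_1\prec a_2$, $a_1\succ a_2$, $a_1\cdot a_2$ and their opposites; we replace three of them by the images under $\phi$ of the generators of the shuffle operad $\PostLie$, namely the bracket $[a_1,a_2]=a_1\cdot a_2-a_2\cdot a_1$, the operation $a_1\triangleleft a_2 = a_1\prec a_2-a_2\succ a_1$ and its shuffle opposite $a_1\overline{\triangleleft}a_2 = a_2\triangleleft a_1$, keeping $a_1\prec a_2$, $a_1\succ a_2$, $a_1\cdot a_2$ as complementary generators. This is a genuine change of basis of $\TriDend$ in arity two, since the remaining opposites are recovered via $a_2\cdot a_1=a_1\cdot a_2-[a_1,a_2]$, $a_2\succ a_1=a_1\prec a_2-a_1\triangleleft a_2$ and $a_2\prec a_1=a_1\succ a_2+a_1\overline{\triangleleft}a_2$. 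It may turn out that more symmetric combinations (such as $a_1\cdot a_2+a_2\cdot a_1$) are better adapted to the admissible order below; we keep this choice for exposition.

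Next one writes down the rewriting system. It consists of: the shuffle form of the Gr\"obner basis of $\TriDend$ found in the proof of Theorem~\ref{th:TDRB}, now rewritten in the six generators above and, using the change-of-variable formulae, so organised that in each reduced monomial the complementary operations occur above the post-Lie ones; the rules involving only the post-Lie operations, which form the Gr\"obner basis of $\PostLie$ exhibited in the proof of Theorem~\ref{th:PLRB}; and mixed rules that rewrite every composite in which the bracket, $\triangleleft$ or $\overline{\triangleleft}$ is applied on top of a complementary operation, pushing the post-Lie operation toward the leaves by means of the tridendriform relations. The target shape of a normal form is then a tree monomial labelled near the root by complementary operations which becomes a $\PostLie$-tree below a certain frontier --- an element of $\calX\circ\PostLie$ for a suitable symmetric sequence~$\calX$. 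One then checks, as in the earlier proofs, that the system is (i)~consistent with the defining relations of $\TriDend$, which is automatic since every rule follows from those relations and the change-of-variable formulae; (ii)~terminating, because the mixed rules move the post-Lie operations closer to the leaves (hence can be applied only finitely often) and the remaining rules come from admissible orderings --- reverse length path-lexicographic orders on shuffle tree monomials refining those of Theorems~\ref{th:TDRB} and~\ref{th:PLRB}; and (iii)~confluent, which for the purely tridendriform overlaps is the Gr\"obner basis property established in Theorem~\ref{th:TDRB} (certified by $\dim\TriDend^!(n)=2^n-1$), for the purely post-Lie overlaps is that of Theorem~\ref{th:PLRB} (certified by $\dim\PostLie^!(n)=2^n-1$), and for the mixed critical pairs is a finite direct verification. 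Finally, since no left-hand side involves a tree monomial obtained by the right $\PostLie$-action, \cite[Th.~4(2)]{Dot09} yields freeness, and Proposition~\ref{th:PBWNat} concludes.

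The main obstacle will be the simultaneous fulfilment of the last two requirements: the rewriting system must be confluent \emph{and} have all its leading terms disjoint from the right $\PostLie$-module action. As in~\cite[Theorem~3]{DT}, achieving both at once is not automatic --- it constrains the choice of the complementary generators and of the admissible order, and may force one to enlarge or modify the rewriting system beyond the naive list of rules, resolving the newly created critical pairs until the system closes up. I expect the bulk of the work, and the step where something could genuinely go wrong, to be this closure: the critical-pair analysis over six binary generators is considerably larger than in the toy cases of Sections~3 and~4, and a priori confluence might be restorable only at the cost of a leading term that does meet the module action, in which case the method would fail. Verifying that this does not happen --- equivalently, that the completed Gr\"obner basis of $\TriDend$ in the chosen order still avoids the $\PostLie$-action --- is the heart of the proof.
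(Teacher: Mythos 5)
Your overall frame is right --- reduce to freeness of $\TriDend$ as a right $\PostLie$-module via Proposition~\ref{th:PBWNat} and certify it with a convergent rewriting system whose left-hand sides avoid the module action, as in \cite[Th.~4(2)]{Dot09} --- but as written the argument has a genuine gap that you yourself flag: the rewriting system is never exhibited, and you explicitly defer the confluence check and the verification that the completed system keeps its leading terms away from the $\PostLie$-action, calling it ``the heart of the proof'' and conceding that ``the method would fail'' if it goes wrong. That is precisely the part that cannot be waved at: with your choice of complementary generators $(\prec,\succ,\cdot)$ the change of variables makes every relation inhomogeneous in the number of post-Lie operations (e.g.\ $a_2\cdot a_1=a_1\cdot a_2-[a_1,a_2]$ mixes weights), so the naive union of the two Gr\"obner bases from Theorems~\ref{th:TDRB} and~\ref{th:PLRB} plus ``mixed rules'' generates a large completion problem over six generators that you have not carried out, and there is no a priori reason the completed basis avoids the module action. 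A plan plus an acknowledged obstruction is not a proof.

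The paper takes a structurally different route exactly to sidestep this. It changes generators to the symmetric/antisymmetric combinations $[a_1,a_2]$, $a_1\circ a_2=a_1\cdot a_2+a_2\cdot a_1$, $a_1\triangleleft a_2$, $a_1\triangleright a_2=a_1\prec a_2+a_2\succ a_1$ (the option you mention but discard), then filters $\TriDend$ by powers of the two-sided ideal generated by the image of $\phi$. In the associated graded operad the lower-order terms drop out and the relations become exactly those of the post-Poisson operad $\PostPoisson$, whose structure (a $\ComTriAss$-suboperad, the $\PostLie$ relations, and homogeneous compatibility relations) makes the rewriting system short and weight-homogeneous; the quadratic Gr\"obner basis there is certified not by a full critical-pair analysis but by the dimension count $\dim(4)=1080=45\cdot 4!=\dim\TriDend(4)$, which forces the surjection $\PostPoisson\twoheadrightarrow\gr\TriDend$ to be an isomorphism. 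Freeness of $\gr\TriDend$ as a right $\PostLie$-module then follows from \cite[Th.~4(2)]{Dot09}, and a spectral sequence argument transfers it back to $\TriDend$ itself. If you want to salvage your direct approach you would have to actually complete the rewriting system and resolve all mixed critical pairs; adopting the filtration--associated-graded--dimension-count scheme is what makes the computation feasible and is the missing idea in your proposal.
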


\begin{proof}
It is enough to show that the operad $\TriDend$ is a free right $\PostLie$-module. As above, to not complicate the notation, we identify operations with their images under $\phi$, so we use the name $a_1\triangleleft a_2$ for the operation $a_1\prec a_2-a_2\succ a_1\in \TriDend$. 

We begin with considering different generators of the operad $\TriDend$: 
\begin{gather*}
[a_1,a_2]=a_1\cdot a_2-a_2\cdot a_1,\qquad
a_1\circ a_2=a_1\cdot a_2+a_2\cdot a_1,\\
a_1\triangleleft a_2=a_1\prec a_2-a_2\succ a_1,\qquad
a_1\triangleright a_2=a_1\prec a_2+a_2\succ a_1.
\end{gather*}

By a direct computation, all identities between these operations are consequences of the identities
\begin{gather*}
[[a_1,a_2],a_3]-[[a_1,a_3],a_2]=[a_1,[a_2,a_3]],\\ 
[a_1,a_2]\triangleleft a_3=[a_1\triangleleft a_3,a_2]+[a_1,a_2\triangleleft a_3],\\ 
\left((a_1\triangleleft a_2)\triangleleft a_3-a_1\triangleleft(a_2\triangleleft a_3)\right)-\left((a_1\triangleleft a_3)\triangleleft a_2-a_1\triangleleft(a_3\triangleleft a_2)\right)=a_1\triangleleft[a_2,a_3],\\ 
[a_1\circ a_2, a_3]=[a_1,a_3]\circ a_2+a_1\circ [a_2, a_3],\\ 
[a_1,a_2\triangleright a_3]=[a_1,a_2]\triangleright a_3+(a_1\triangleleft a_3)\circ a_2,\\ 
(a_1\circ a_2)\triangleleft a_3=(a_1\triangleleft a_3)\circ a_2+a_1\circ(a_2\triangleleft a_3),\\ 
a_1\triangleleft(a_2\triangleright a_3+a_3\triangleright a_2+a_2\circ a_3)=(a_1\triangleright a_2)\triangleleft a_3+(a_1\triangleright a_3)\triangleleft a_2,\\ 
a_1\triangleright(a_2\triangleleft a_3-a_3\triangleleft a_2-[a_2,a_3])=(a_1\triangleleft a_2)\triangleright a_3-(a_1\triangleright a_3)\triangleleft a_2,\\ 
(a_1\circ a_2)\circ a_3-a_1\circ(a_2\circ a_3)=[[a_1,a_3],a_2],\\ 
(a_1\triangleright a_2)\triangleright a_3 + (a_1\triangleleft a_3)\triangleleft a_2 = a_1\triangleright (a_2\triangleright a_3 + a_3\triangleright a_2 + a_2\circ a_3), \\ 
(a_1\circ a_2)\triangleright a_3 + [a_1,a_2]\triangleleft a_3=a_1\circ (a_2\triangleright a_3)+[a_1,a_2\triangleleft a_3]. 
\end{gather*}

Let us consider the filtration of the operad $\TriDend$ by powers of the two-sided ideal generated by the image of $\phi$, i.e. the ideal generated by the operations $[a_1,a_2]$ and $a_1\triangleleft a_2$. In the associated graded operad, the identities we determined become
\begin{gather}
[[a_1,a_2],a_3]-[[a_1,a_3],a_2]=[a_1,[a_2,a_3]],\label{eq:PostLie1}\\ 
[a_1,a_2]\triangleleft a_3=[a_1\triangleleft a_3,a_2]+[a_1,a_2\triangleleft a_3],\label{eq:PostLie2}\\ 
\left((a_1\triangleleft a_2)\triangleleft a_3-a_1\triangleleft(a_2\triangleleft a_3)\right)-\left((a_1\triangleleft a_3)\triangleleft a_2-a_1\triangleleft(a_3\triangleleft a_2)\right)=a_1\triangleleft[a_2,a_3],\label{eq:PostLie3}\\ 
[a_1\circ a_2, a_3]=[a_1,a_3]\circ a_2+a_1\circ [a_2, a_3],\label{eq:distr1}\\ 
[a_1,a_2\triangleright a_3]=[a_1,a_2]\triangleright a_3+(a_1\triangleleft a_3)\circ a_2,\label{eq:distr2}\\ 
(a_1\circ a_2)\triangleleft a_3=(a_1\triangleleft a_3)\circ a_2+a_1\circ(a_2\triangleleft a_3),\label{eq:distr3}\\ 
a_1\triangleleft(a_2\triangleright a_3+a_3\triangleright a_2+a_2\circ a_3)=(a_1\triangleright a_2)\triangleleft a_3+(a_1\triangleright a_3)\triangleleft a_2,\label{eq:distr4}\\ 
a_1\triangleright(a_2\triangleleft a_3-a_3\triangleleft a_2-[a_2,a_3])=(a_1\triangleleft a_2)\triangleright a_3-(a_1\triangleright a_3)\triangleleft a_2,\label{eq:distr5}\\ 
(a_1\circ a_2)\circ a_3=a_1\circ(a_2\circ a_3),\label{eq:ComTriAs1}\\ 
(a_1\triangleright a_2)\triangleright a_3  = a_1\triangleright (a_2\triangleright a_3 + a_3\triangleright a_2 + a_2\circ a_3),\label{eq:ComTriAs2} \\ 
(a_1\circ a_2)\triangleright a_3 =a_1\circ (a_2\triangleright a_3). \label{eq:ComTriAs3}
\end{gather}

These are precisely the defining relations of the operad $\PostPoisson$ controlling (right) post-Poisson algebras, see \cite[Section A.4]{BBGN} and \cite{NB}. Thus, there is a surjective map of operads 
 \[
\PostPoisson\twoheadrightarrow\gr\TriDend .
 \]
The operad $\PostPoisson$ has a suboperad generated by the operations $\circ$ and $\triangleright$ with defining relations \eqref{eq:ComTriAs1}--\eqref{eq:ComTriAs3}; this operad is usually denoted $\ComTriAss$ and controls the so called commutative triassociative algebras. In addition to those relations, the relations of the operad $\PostPoisson$ include the defining relations of the operad $\PostLie$ (identities \eqref{eq:PostLie1}--\eqref{eq:PostLie3}) and the compatibility relations, of which \eqref{eq:distr1} and \eqref{eq:distr2} say that the adjoint action of $[-,-]$ on $\ComTriAss$ is by derivations, \eqref{eq:distr3} and \eqref{eq:distr5} say that the right action of $(-\triangleleft-)$ on $\ComTriAss$ can be rewritten as a combination of tree monomials whose root label is in $\ComTriAss$, and finally \eqref{eq:distr4} (in conjunction with \eqref{eq:distr5}) say that some of the left actions of $(-\triangleleft-)$ on $\ComTriAss$ can be rewritten as a combination of tree monomials whose root label is in $\ComTriAss$. This can be used to devise a terminating rewriting system, where monomials that are being rewritten are precisely the actions on $\ComTriAss$ we just described. This shows that on the level of the underlying nonsymmetric sequences, there is a surjection onto the shuffle operad $\PostPoisson$ from the shuffle operad associated to the symmetric operad with defining relations
\begin{gather*}
[[a_1,a_2],a_3]-[[a_1,a_3],a_2]=[a_1,[a_2,a_3]],\\ 
[a_1,a_2]\triangleleft a_3=[a_1\triangleleft a_3,a_2]+[a_1,a_2\triangleleft a_3],\\ 
\left((a_1\triangleleft a_2)\triangleleft a_3-a_1\triangleleft(a_2\triangleleft a_3)\right)-\left((a_1\triangleleft a_3)\triangleleft a_2-a_1\triangleleft(a_3\triangleleft a_2)\right)=a_1\triangleleft[a_2,a_3],\\ 
[a_1\circ a_2, a_3]=0,\qquad
[a_1,a_2\triangleright a_3]=0,\qquad
(a_1\circ a_2)\triangleleft a_3=0,\\ 
a_1\triangleleft(a_2\triangleright a_3+a_3\triangleright a_2+a_2\circ a_3)=0,\\ 
(a_1\triangleright a_3)\triangleleft a_2=0,\qquad
(a_1\circ a_2)\circ a_3=a_1\circ(a_2\circ a_3),\\ 
(a_1\triangleright a_2)\triangleright a_3  = a_1\triangleright (a_2\triangleright a_3 + a_3\triangleright a_2 + a_2\circ a_3), \\ 
(a_1\circ a_2)\triangleright a_3 =a_1\circ (a_2\triangleright a_3). 
\end{gather*}
This shuffle operad is generated by the six elements $[-,-]$, $(-\circ-)$, $(-\triangleleft-)$, $(-\triangleright-)$, $(-\bar{\triangleleft}-)$, $(-\bar{\triangleright}-)$, where the last two operations are, as always, the opposites of the corresponding operations. We consider the path-lexicographic ordering of tree monomials induced by the ordering 
 \[
[-,-]< (-\bar{\triangleleft}-) < (-\triangleleft-) < (-\circ-) < (-\triangleright-) < (-\bar{\triangleright}-) .
 \]
A slightly tedious but direct computation shows that the operad above has a quadratic Gr\"obner basis for this ordering of monomials; moreover, the space of arity four elements of this operad has dimension $1080$. However, the dimension of $\TriDend(4)$ is equal to $45\cdot 4!=1080$ as well, so both surjections constructed along the way are isomorphisms, and our rewriting system for the shuffle operad $\PostPoisson$ is convergent. 
Finally, we observe that the leading terms of our Gr\"obner basis do not involve tree monomials obtained by the right module action of $\PostLie$, so \cite[Th.~4(2)]{Dot09} applies, proving freeness of $\PostPoisson$ as a right module; by a spectral sequence argument the same applies to $\TriDend$. 
\end{proof}

We remark that as in \cite{DT}, our proof implies that the operad $\PostPoisson$ is Koszul; to the best of our knowledge, this result is new.

\bibliographystyle{amsplain}
\providecommand{\bysame}{\leavevmode\hbox to3em{\hrulefill}\thinspace}

\end{document}